\numberwithin{equation}{section}
\newtheorem{theorem}{Theorem}[section]
\newtheorem{proposition}[theorem]{Proposition}
\newtheorem{corollary}[theorem]{Corollary}
\newtheorem{lemma}[theorem]{Lemma}
\newtheorem{question}[theorem]{Question}
\newtheorem{remark}[theorem]{Remark}
\newtheorem{notation}[theorem]{Notation}
\newtheorem{example}[theorem]{Example}
\newtheorem{definition}[theorem]{Definition}
\begin{document}
\begin{center} {\Large On the uniform bound of the index of reducibility of parameter ideals of a module whose polynomial type is at most one}
 \footnote [1]{\noindent{\bf Key words and phrases:}  The index of reducibility; the polynomial type of module; local cohomology\\
\indent{\bf AMS Classification 2010:} 13H10; 13D45.\\
This research is funded by Vietnam National Foundation for Science
and Technology Development (NAFOSTED) under grant number
101.01-2011.49.}
\end{center}
\begin{center}
                {PHAM HUNG QUY}
\end{center}
\begin{abstract}
Let $(R, \frak m)$ be a Noetherian local ring, $M$ a finitely
generated $R$-module. The aim of this paper is to prove a uniform
formula for the index of reducibility of paprameter ideals of $M$
provided the polynomial type of $M$ is at most one.
\end{abstract}
\section{Introduction}
Throughout this paper, let $(R, \frak m)$ be a Notherian local ring,
$M$ a finitely generated $R$-module of dimension $d$. Let
$\underline{x} = x_1,...,x_d$ be a system of parameters of $M$ and
$\frak q = (x_1,...,x_d)$. Let $\underline{n} = (n_1,...,n_d)$ be a
$d$-tuple of positive integers and $\underline{x}^{\underline{n}} =
x_1^{n_1},...,x_d^{n_d}$. We consider the difference
$$I_{M,\underline{x}}(\underline{n}) = \ell(M/(\underline{x}^{\underline{n}})M) -
e(\underline{x}^{\underline{n}};M)$$ as function in $\underline{n}$,
where $e(\underline{x};M)$ is the Serre multiplicity of $M$ with
respect to the sequence $\underline{x}$. Although
$I_{M,\underline{x}}(\underline{n})$ may be not a polynomial for
$n_1,...,n_d$ large enough, it is bounded above by polynomials.
Moreover, N.T. Cuong in \cite{C92} proved that the least degree of
all polynomials in $\underline{n}$ bounding above
$I_{M,\underline{x}}(\underline{n})$ is independent of the choice of
$\underline{x}$, and it is denoted by $p(M)$. The invariant $p(M)$
is called {\it the polynomial type} of $M$. Recalling that $M$ is a
Cohen-Macaulay module if and only if $\ell(M/\frak qM) = e(\frak
q;M)$ for some (and hence for all) parameter ideal $\frak q$ of $M$.
Thus, if we stipulate the degree of the zero polynomial is $-
\infty$, then $M$ is a Cohen-Macaulay module if and only if $p(M) =
- \infty$. In order to generalize the class of Cohen-Macaulay
module, J. Stuckrad and W. Vogel introduced the class of Buchsbaum
modules. An $R$-module $M$ is called Buchsbaum if and only if the
difference $\ell(M/\frak qM) - e(\frak q;M)$ is a constant for all
$\frak q$. For the theory of Buchsbaum modules see \cite{SV86}.
Furthermore, N.T. Cuong, P. Schenzel and N.V. Trung introduced the
class of generalized Cohen-Macaulay modules. Module $M$ is
generalized Cohen-Macaulay module if and only if the difference
$\ell(M/\frak qM) - e(\frak q;M)$ is bounded above for all parameter
ideals $\frak q$. In that paper they showed that $M$ is generalized
Cohen-Macaulay if and only if the $i$-th local cohomology module
$H^i_{\frak m}(M)$ has finite length for all $i = 0,...., d-1$. Set
$I(M) = \sup_{\frak q} \{\ell(M/\frak qM) - e(\frak q;M)\}$ where
the supremum is taken over all parameter ideals of $M$. If $M$ is a
generalized Cohen-Macaulay module we have $I(M) =
\sum_{i=0}^{d-1}\binom{d-1}{i} \ell(H^i_{\frak m}(M))$, and this
invariant is called the Buchsbaum invariant of $M$ (see
\cite{CST78}, \cite{Tr86}). It is easy to see that $M$ is a
generalized Cohen-Macaulay module if and only if $p(M) \leq 0$. The
structure of $M$ when $p(M) > 0$ is known little and there is no standard techniques to study since the local cohomology $H^i_{\frak m}(M)$ may be not
finitely generated for all $i \geq 1$. Even though the case $p(M) = 1$, the proof sometimes is very complicate (for example, see \cite{AGH05}).

Let $\frak q$ be a parameter ideal of $M$. The number of
irreducibility components appear in an irredundant irreducible
decomposition of $\frak qM$ is called the {\it index of
reducibility} of $\frak q$ on $M$, and denoted by
$\mathcal{N}_R(\frak q, M)$. It is well known that
$\mathcal{N}_R(\frak q, M) = \dim_{R/\frak m}\mathrm{Soc}(M/\frak
qM)$, where $\mathrm{Soc}(N) = 0:_N\frak m \cong
\mathrm{Hom}(R/\frak m, N)$ for an arbitrary $R$-module $N$. A
classical result of D.G. Northcott claimed that the index of
reducibility of parameter ideals on a Cohen-Macaulay module is an
invariant of the module. The converse of this result is not true,
the first counterexample is given by S. Endo and M. Narita in
\cite{EN64}. If $M$ is generalized Cohen-Macaulay, S. Goto and N.
Suzuki proved that $\mathcal{N}_R(\frak q, M)$ has an upper bound,
more precisely
$$\mathcal{N}_R(\frak q, M) \leq \sum_{i=0}^{d-1} \binom{d}{i} \ell(H^i_{\frak m}(M)) +
\dim_{R/\frak m}\mathrm{Soc}(H^d_{\frak m}(M)) \quad (\star)$$ for
all paprameter ideals $\frak q$ of $M$ (cf. \cite[Theorem
2.1]{GS84}). It is worthy to mention that if $M$ is Buchsbaum, Goto
and H. Sakurai in \cite{GS03} showed that the inequality $(\star)$
becomes an equality for all parameter ideals $\frak q$ contained in
a large enough power of $\frak m$. In \cite[Theorem 1.1]{CT08}, N.T.
Cuong and H.L. Truong considered Goto-Sakurai's result for
generalized Cohen-Macaulay modules. In fact, they proved that
$$\mathcal{N}_R(\frak q, M) = \sum_{i=0}^{d} \binom{d}{i} \dim_{R/\frak m}\mathrm{Soc}(H^i_{\frak m}(M))$$
for all $\frak q \subseteq \frak m^n$, $n \gg 0$. Recently, N.T.
Cuong and the author reproved this result based on the study of the
splitting of local cohomology (cf. \cite{CQ11}). A generalization of
Cuong-Truong's result can be found in \cite{Q12}.

The aim of this paper is to extend the result of Goto and Suzuki for
the class of modules of the polynomial type at most one. We show
that if $M$ is a finitely generated $R$-module such that $p(M) \leq
1$, then $\mathcal{N}_R(\frak q, M)$ is bounded above for all
parameter ideals $\frak q$ of $M$.

This paper is organized as follows. In Section $2$ we recall the
notions of the polynomial type of a module and the index of
reducibility. This paper is inspired by the uniform property of the
minimal number of generators of ideals in local rings of dimension
one (cf. \cite[Chapter 3]{S}) which we mention in Section 3. By Matlis' dual we obtain a similar result for Artinian modules of dimension one. Based on this result we can give the proof of the
main result by using the standard techniques of local cohomology in Section 4.

\section{Preliminaries}
We first recall the notion of {\it the polynomial type} of a module.
Let $(R, \frak m)$ be a Notherian local ring, $M$ a finitely
generated $R$-module of dimension $d$. Let $\underline{x} =
x_1,...,x_d$ be a system of parameters of $M$ and $\frak q =
(x_1,...,x_d)$. Let $\underline{n} = (n_1,...,n_d)$ be a $d$-tuple
of positive integers and $\underline{x}^{\underline{n}} =
x_1^{n_1},...,x_d^{n_d}$. We consider the difference
$$I_{M,\underline{x}}(\underline{n}) = \ell(M/(\underline{x}^{\underline{n}})M) -
e(\underline{x}^{\underline{n}};M)$$ as function in $\underline{n}$,
where $e(\underline{x};M)$ is the Serre multiplicity of $M$ with
respect to the sequence $\underline{x}$. N.T. Cuong in \cite[Theorem
2.3]{C92} showed that the least degree of all polynomials in
$\underline{n}$ bounding above $I_{M,\underline{x}}(\underline{n})$
is independent of the choice of $\underline{x}$.
\begin{definition}\rm
The least degree of all polynomials in $\underline{n}$ bounding
above $I_{M,\underline{x}}(\underline{n})$ is called {\it the
polynomial type} of $M$, and denoted by $p(M)$.
\end{definition}
The following basic properties of $p(M)$ can be found in \cite{C92}.
\begin{remark}\rm
\begin{enumerate}[{(i)}]
\item We have $p(M) \leq d-1$.
\item  An $R$-module $M$ is Cohen-Macaulay if and only if $p(M) = -\infty$.
Moreover, $M$ is generalized Cohen-Macaulay if and only if $p(M) =
0$.
\item If we denote the $\frak m$-adic completion of $M$ by
$\widehat{M}$, then $p(M) = p_{\widehat{R}}(\widehat{M})$.
\end{enumerate}
\end{remark}
Let $a_i(M) = \mathrm{Ann}H^i_{\frak m}(M)$ for $0 \leq i \leq d-1$
and $\frak a(M) = \frak a_0(M) \cdots \frak a_{d-1}(M)$. We denote
by $NC(M)$ the non-Cohen-Macaulay locus of $M$ i.e. $NC(M) = \{\frak
p \in \mathrm{supp}(M)\,|\, M_{\frak p} \,\, \text{is not
Cohen-Macaulay} \}$. Recalling that $M$ is called {\it
equidimensional} if $\dim M = \dim R/\frak p$ for all minimal
associated primes of $M$. The following result give the meaning of
the polynomial type.
\begin{theorem}[\cite{C91}, Theorem 1.2] \label{T2.3} Suppose that $R$ admits a dualizing complex. Then
\begin{enumerate}[{(i)}]\rm
\item $p(M) = \dim R/\frak a(M)$.
\item {\it If $M$ is equidimensional then $p(M) = \dim (NC(M))$.}
\end{enumerate}
\end{theorem}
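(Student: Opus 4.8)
The plan is to reduce part (i) to statements about the individual ideals $\mathfrak{a}_i(M)=\operatorname{Ann}H^i_{\mathfrak{m}}(M)$, and then to deduce (ii) from (i) by local duality. Since $V(\mathfrak{a}(M))=\bigcup_{i=0}^{d-1}V(\mathfrak{a}_i(M))$ we have $\dim R/\mathfrak{a}(M)=\max_{0\le i\le d-1}\dim R/\mathfrak{a}_i(M)=:t$, so (i) amounts to the two inequalities $p(M)\le t$ and $p(M)\ge t$. Because $p(M)$ does not depend on the chosen system of parameters (\cite[Theorem 2.3]{C92}), it is enough, for each inequality, to produce one convenient system of parameters.

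For $p(M)\le t$ I would induct on $t$. If $t\le 0$, then every $H^i_{\mathfrak{m}}(M)$ with $i<d$ has finite length, so $M$ is generalized Cohen--Macaulay and $p(M)\le 0$ (the case $\mathfrak{a}(M)=R$ being that $M$ is Cohen--Macaulay, $p(M)=-\infty$). If $t\ge 1$, choose a parameter $x_d$ of $M$ that is filter-regular for $M$ and avoids the finitely many minimal primes of $\mathfrak{a}(M)$ of dimension $t$, and extend it to a system of parameters $x_1,\dots,x_d$ of $M$. Filter-regularity forces $0:_M x_d^{n}\subseteq H^0_{\mathfrak{m}}(M)$ for all $n$, hence $e(x_1^{n_1},\dots,x_{d-1}^{n_{d-1}};M/x_d^{n_d}M)=e(\underline{x}^{\underline{n}};M)$; telescoping the exact sequences $0\to x_dM/x_d^{n_d}M\to M/x_d^{n_d}M\to M/x_dM\to 0$ and the surjections $M/x_d^{n_d-1}M\twoheadrightarrow x_dM/x_d^{n_d}M$ then gives
\[
I_{M,\underline{x}}(\underline{n})\ \le\ n_d\cdot I_{M/x_dM,\,x_1,\dots,x_{d-1}}(n_1,\dots,n_{d-1}).
\]
On the other hand, the long exact local cohomology sequences of $0\to x_dM\to M\to M/x_dM\to 0$ (again using that $0:_M x_d$ has finite length) show $\mathfrak{a}_i(M/x_dM)\supseteq x_dR+\mathfrak{a}_i(M)\mathfrak{a}_{i+1}(M)$, so $\dim R/\mathfrak{a}(M/x_dM)\le\dim R/(x_dR+\mathfrak{a}(M))\le t-1$. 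By the induction hypothesis, $I_{M/x_dM,\,x_1,\dots,x_{d-1}}$ is bounded above by a polynomial of degree $\le t-1$, and the displayed inequality yields $p(M)\le t$.

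For $p(M)\ge t$, the cases $t\le 0$ being clear, assume $t\ge 1$. Fix $i<d$ and an associated prime $\mathfrak{p}$ of $R/\mathfrak{a}_i(M)$ with $\dim R/\mathfrak{p}=t$, and choose a system of parameters $\underline{x}=x_1,\dots,x_d$ of $M$ whose images $\bar x_1,\dots,\bar x_t$ form a system of parameters of $R/\mathfrak{p}$ while $x_{t+1},\dots,x_d\in\mathfrak{p}$ (a ``distinguished'' system, obtained by repeated prime avoidance). Factoring $\ell(M/\underline{x}^{\underline{n}}M)=\ell\bigl(M'/(x_1^{n_1},\dots,x_t^{n_t})M'\bigr)$ with $M'=M/(x_{t+1}^{n_{t+1}},\dots,x_d^{n_d})M$ (of dimension $t$), one gets $I_{M,\underline{x}}(\underline{n})\ge n_1\cdots n_t\bigl(e(x_1,\dots,x_t;M')-n_{t+1}\cdots n_d\,e(\underline{x};M)\bigr)$; the associativity formula for multiplicities expresses $e(x_1,\dots,x_t;M')$ as $n_{t+1}\cdots n_d\,e(\underline{x};M)$ plus nonnegative correction terms coming from the submodules $0:_{(-)}x_j^{n_j}$, and I would show that $\mathfrak{p}\supseteq\mathfrak{a}_i(M)$ forces at least one of these corrections to be bounded below by a fixed $c>0$ for all $\underline{n}\gg0$. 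Then $I_{M,\underline{x}}(\underline{n})\ge c\,n_1\cdots n_t$ is bounded above by no polynomial of degree $t-1$, so $p(M)\ge t$, which together with the previous paragraph proves (i). I expect this step --- constructing the distinguished system of parameters and making the lower bound on the multiplicity corrections uniform in $\underline{n}$ --- to be the main obstacle.

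For (ii) I would deduce everything from (i) by local duality. Replacing $R$ by $R/\operatorname{Ann}M$ (still equidimensional, catenary, and carrying a dualizing complex, with $M$ now faithful), one has $\operatorname{ht}\mathfrak{p}+\dim R/\mathfrak{p}=d$ for every $\mathfrak{p}$, so the dimension function of a normalized dualizing complex $D^{\bullet}$ is $\mathfrak{p}\mapsto\dim R/\mathfrak{p}$. Local duality gives an isomorphism $H^i_{\mathfrak{m}}(M)^{\vee}\cong H^{-i}\operatorname{RHom}_R(M,D^{\bullet})$ of $R$-modules, the right side being finitely generated, so $V(\mathfrak{a}_i(M))=\operatorname{Supp}H^{-i}\operatorname{RHom}_R(M,D^{\bullet})$. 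Localizing at $\mathfrak{p}$ and using $D^{\bullet}_{\mathfrak{p}}\simeq D^{\bullet}_{R_{\mathfrak{p}}}[\dim R/\mathfrak{p}]$ together with local duality over $R_{\mathfrak{p}}$ gives
\[
\mathfrak{p}\in V(\mathfrak{a}_i(M))\iff H^{i-\dim R/\mathfrak{p}}_{\mathfrak{p}R_{\mathfrak{p}}}(M_{\mathfrak{p}})\ne 0 .
\]
Since $M_{\mathfrak{p}}$ is faithful over $R_{\mathfrak{p}}$ we have $\dim M_{\mathfrak{p}}=\dim R_{\mathfrak{p}}=d-\dim R/\mathfrak{p}$; putting $j=i-\dim R/\mathfrak{p}$, the range $0\le i\le d-1$ corresponds to $j<\dim M_{\mathfrak{p}}$, so $\bigcup_{i=0}^{d-1}V(\mathfrak{a}_i(M))$ is precisely the set of primes $\mathfrak{p}$ with $H^j_{\mathfrak{p}R_{\mathfrak{p}}}(M_{\mathfrak{p}})\ne0$ for some $j<\dim M_{\mathfrak{p}}$, i.e.\ $NC(M)$. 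Taking dimensions and invoking (i) gives $p(M)=\dim R/\mathfrak{a}(M)=\dim NC(M)$.
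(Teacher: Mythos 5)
Note first that the paper offers no proof of this theorem: it is quoted verbatim from \cite{C91}, so your proposal has to stand on its own. Two of its three pieces do. Your argument for $p(M)\le\dim R/\mathfrak{a}(M)$ --- descending induction on $t=\dim R/\mathfrak{a}(M)$ via a filter-regular parameter $x_d$ avoiding the $t$-dimensional minimal primes of $\mathfrak{a}(M)$, combined with $\ell(M/(\mathfrak{b}+x_d^{n})M)\le n\,\ell(M/(\mathfrak{b}+x_d)M)$ and $\mathfrak{a}_i(M/x_dM)\supseteq x_dR+\mathfrak{a}_i(M)\mathfrak{a}_{i+1}(M)$ --- is sound (and, correctly, needs no dualizing complex). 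Your part (ii) is the standard deduction from (i): over $R/\operatorname{Ann}M$, local duality identifies $V(\mathfrak{a}_i(M))$ with the support of the deficiency module $K^i(M)$, the dimension formula gives $K^i(M)_{\mathfrak{p}}\cong K^{\,i-\dim R/\mathfrak{p}}(M_{\mathfrak{p}})$, and hence $V(\mathfrak{a}(M))=NC(M)$ for equidimensional $M$.

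The genuine gap is the inequality $p(M)\ge\dim R/\mathfrak{a}(M)$, precisely the step you flag as ``the main obstacle'' and leave as an intention (``I would show that \dots at least one of these corrections is bounded below by a fixed $c>0$''). This cannot be waved through, because your proposed mechanism nowhere uses the dualizing complex, while the inequality is false without that hypothesis: Remark 2.5 of this paper records Nagata's two-dimensional local domain with $\dim R/\mathfrak{a}(R)=2$ but $p(R)=1$. (In that example your recipe degenerates: $\mathfrak{a}_1(R)=0$, the relevant minimal prime is $(0)$, $t=d$, and all your ``correction terms'' vanish identically.) The missing idea is exactly the duality you postpone to part (ii). For $\mathfrak{p}$ \emph{minimal} over $\mathfrak{a}_i(M)$ with $\dim R/\mathfrak{p}=t$, the isomorphism $K^i(M)_{\mathfrak{p}}\cong K^{\,i-t}(M_{\mathfrak{p}})\ne 0$ (which needs the dualizing complex and the catenarity it forces) shows that either $M_{\mathfrak{p}}$ is not Cohen--Macaulay or $\dim M_{\mathfrak{p}}<d-t$. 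Then, for your distinguished system of parameters, the correct way to make the correction terms visible is the associativity formula
$$e(x_1^{n_1},\dots,x_t^{n_t};M')-e(\underline{x}^{\underline{n}};M)=n_1\cdots n_t\sum_{\mathfrak{q}}\Bigl(\ell(M'_{\mathfrak{q}})-e\bigl(x_{t+1}^{n_{t+1}},\dots,x_d^{n_d};M_{\mathfrak{q}}\bigr)\Bigr)\,e(x_1,\dots,x_t;R/\mathfrak{q}),$$
the sum running over the $t$-dimensional minimal primes of $\operatorname{Supp}M'$; each bracket is nonnegative, and the conclusion of the previous sentence forces the bracket at $\mathfrak{q}=\mathfrak{p}$ to be at least $1$ for every $\underline{n}$, giving $I_{M,\underline{x}}(\underline{n})\ge n_1\cdots n_t$. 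Until a statement of this kind is actually proved, the lower bound, and with it part (i), is not established.
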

\begin{example}\rm
Let $S = k[[X_1,X_2,...,X_{2n+1}]]/(X_1,...,X_n) \cap (X_{n+1},...,X_{2n})$ where $k$ is a field and $n$ is a positive integer greater then $1$. It is easy to see that $R$ is not generalized Cohen-Macaulay but $p(M) = 1$.
\end{example}
\begin{remark}\rm
By \cite{N62}, there exists a local domain $(R,\frak m)$ of dimension two such that the $\frak m$-adic completion of $R$ is $\widehat{R} = k[[X, Y, Z]]/(X) \cap (Y, Z)$. We can check that $\dim R/\frak a(R) = 2$ and $\dim (NC(R)) = 0$ but $\dim \widehat{R}/\frak a(\widehat{R}) =  \dim (NC(\widehat{R})) = p(\widehat{R}) = 1$. So $\dim R/\frak a(R)$ and $\dim (NC(R))$ may be change after passing to the completion. This is the reason we use the notion of the polynomial type in this paper.
\end{remark}
We next recall the object of the present paper.
\begin{definition}\rm
Let $\frak q$ be a parameter ideal of $M$. The {\it index of
reducibility} of $\frak q$ on $M$ is the number of irreducibility
components appear in an irredundant irreducible decomposition of
$\frak qM$, and denoted by $\mathcal{N}_R(\frak q, M)$.
\end{definition}
\begin{remark}\rm
\begin{enumerate}[{(i)}]
\item It is well known that $\mathcal{N}_R(\frak q, M) = \dim_{R/\frak
m}\mathrm{Soc}(M/\frak qM)$, where $\mathrm{Soc}(N) = 0:_N\frak m
\cong \mathrm{Hom}(R/\frak m, N)$ for an arbitrary $R$-module $N$.
\item If $M$ is Cohen-Macaulay i.e. $p(M) = -\infty$, then $\mathcal{N}_R(\frak q, M) = \dim_{R/\frak
m}\mathrm{Soc}(H^d_{\frak m}(M))$ for all parameter ideals $\frak
q$.
\item If $M$ is generalized Cohen-Macaulay i.e. $p(M) = 0$, then Goto and Suzuki proved that
$$\mathcal{N}_R(\frak q, M) \leq \sum_{i=0}^{d-1} \binom{d}{i} \ell(H^i_{\frak m}(M)) +
\dim_{R/\frak m}\mathrm{Soc}(H^d_{\frak m}(M))$$ for all paprameter
ideals $\frak q$ of $M$ (cf. \cite[Theorem 2.1]{GS84}). Furthermore,
let $n_0$ be a positive integer such that $\frak m^{n_0}H^i_{\frak
m}(M) = 0$ for all $i = 0,...,d-1$. In \cite[Corollary 4.3]{CQ11}
N.T. Cuong and the author showed that for all parameter ideal $\frak
q$ contained in $\frak m^{2n_0}$ we have
$$\mathcal{N}_R(\frak q, M) = \sum_{i=0}^{d} \binom{d}{i} \dim_{R/\frak m}\mathrm{Soc}(H^i_{\frak m}(M)).$$
\end{enumerate}
\end{remark}
\section{Modules of dimension one}
Notice that $p(M)$ and $\mathcal{N}_R(\frak q, M)$ do not change
after passing to the $\frak m$-adic completion. Therefore, in the
rest of this paper we always assume that $(R, \frak m)$ is a
complete ring. In this paper we consider the boundness of $\mathcal{N}_R(\frak q,
M)$ provided $p(M) \leq 1$. In this case Theorem \ref{T2.3} implies
that $H^i_{\frak m}(M)$ is Artinian with $\dim
R/\mathrm{Ann}H^i_{\frak m}(M) \leq 1$ for all $i=0,...,d-1$. Hence
the Matlis' dual of $H^i_{\frak m}(M)$ is a Noetherian module of
dimension at most one for all $i=0,...,d-1$. The minimal number of
generators of a module $N$ will be denoted by $v(N)$. The key role
in our proof of the main result is the following interesting result
of local ring of dimension one (see \cite[Chapter 3]{S}).
\begin{lemma}
Let $(R, \frak m)$ be a local ring of dimension one. Then the
minimal number of generators of ideals of $R$ is bounded above by an
invariant independent of the choice of ideals i.e there is a
positive integer $c$ such that $v(I) = \ell (I/\frak mI) \leq c$ for
all ideal $I$.
\end{lemma}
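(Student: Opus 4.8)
The plan is to reduce to the case of a one-dimensional Cohen--Macaulay ring, where the result is classical, and then to bootstrap to the general one-dimensional local ring via standard prime-avoidance and short-exact-sequence arguments. First I would pass to $\overline{R} = R/H^0_{\frak m}(R)$, since the nilradical-like submodule $H^0_{\frak m}(R)$ has finite length, say $\ell(H^0_{\frak m}(R)) = t$; for any ideal $I$ of $R$ the image $\overline{I}$ satisfies $v(I) \le v(\overline{I}) + t$ (lifting generators), so it suffices to bound $v(\overline{I})$ over ideals of $\overline{R}$. Thus I may assume $R$ has positive depth, i.e. $\operatorname{depth} R = \dim R = 1$, so $R$ is Cohen--Macaulay of dimension one.

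Next, working in this Cohen--Macaulay setting, I would fix a parameter element $x \in \frak m$ that is a nonzerodivisor on $R$ (such $x$ exists since $\operatorname{depth} R = 1$), so that $e = \ell(R/xR)$ is the multiplicity of $R$. The key step is to show $v(I) \le e$ for every ideal $I$: one enlarges $I$ if necessary and uses that over a one-dimensional Cohen--Macaulay ring, for any $\frak m$-primary ideal $J$ containing a nonzerodivisor, the generator count is controlled by $\ell(R/J)$ together with multiplicity bookkeeping; more directly, if $I$ is $\frak m$-primary then $x^N \in I$ for some $N$, and comparing $I/\frak m I$ with the chain from $x^N R$ up to $I$ gives $v(I) \le \ell(I/x^N I) - \ell(\frak m I/x^N I)$-type estimates, which one caps in terms of $e$ using that $\ell(R/x^nR) = ne$ exactly. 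If $I$ is not $\frak m$-primary then $I$ is principal (a nonzero ideal in a one-dimensional domain-like reduced setting, or more carefully: $\operatorname{height} I \in \{0,1\}$, and height-$0$ primes have been killed), so $v(I) = 1$. Hence $c = e$ (or $c = e + t$ before the reduction) works.

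The main obstacle I anticipate is the second paragraph: getting a clean uniform bound on $v(I)$ for $\frak m$-primary $I$ in a one-dimensional Cohen--Macaulay ring that is \emph{not} regular. The slick statement $v(I) \le e(R)$ is true but requires either the theory of the Hilbert--Samuel function (the $0$-th and $1$-st Hilbert coefficients of a parameter ideal, with $v(I) \le \ell(\frak m/\frak m^2)$ failing to be the right bound in general) or an argument via the integral closure / a minimal reduction $(x)$ of $I$, using $\ell(I/xI) = e$ and $\frak m I \supseteq xI$ so that $v(I) = \ell(I/\frak m I) \le \ell(I/xI) = e$. I would present precisely this last line as the crux: choose a minimal reduction $x$ of $I$ (possible since $R/\frak m$ is infinite, or after the harmless faithfully flat extension $R \to R[T]_{\frak m R[T]}$, which changes neither side of the inequality), note $xI \subseteq \frak m I \subseteq I$, and conclude $v(I) = \ell(I/\frak m I) \le \ell(I/xI) = \ell(R/xR) = e$, the middle equality because $x$ is a nonzerodivisor and $I$ is a faithful maximal Cohen--Macaulay module of rank one. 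The remaining steps — the reduction modulo $H^0_{\frak m}(R)$ and the disposal of non-$\frak m$-primary ideals — are then routine.
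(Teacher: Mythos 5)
The paper does not actually prove this lemma---it is quoted from Sally's book \cite{S}---so your argument has to stand on its own. Your overall plan (kill $H^0_{\frak m}(R)$ to reduce to the one-dimensional Cohen--Macaulay case, then bound $v(I)=\ell(I/\frak m I)$ by $\ell(I/xI)$ for a parameter $x$) is exactly the classical route, and the reduction step $v(I)\le v(\overline I)+\ell(H^0_{\frak m}(R))$ is fine. But the crux is wrong as written: you take $x$ to be a minimal reduction of $I$, and then $\ell(R/xR)=e(x;R)=e(I;R)$, which is \emph{not} the multiplicity $e(R)$ and is not bounded independently of $I$. For instance $I=\frak m^n$ has minimal reduction $(x)$ with $\ell(R/xR)=n\,e(R)$; already in $R=k[[t]]$ with $I=(t^n)$ you are forced to take $x=t^n$ up to units, so $\ell(R/xR)=n$. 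Thus the chain $v(I)\le\ell(I/xI)=\ell(R/xR)=e$ breaks at the last equality, and the bound you extract depends on $I$, which defeats the whole point of the lemma. Your side claim that non-$\frak m$-primary ideals are principal is also false in general (e.g.\ $(x^2,xy)$ in $k[[x,y]]/(x^2y)$, a one-dimensional Cohen--Macaulay ring, needs two generators), though this becomes moot once the main estimate is repaired.

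The repair is small but essential: fix \emph{one} nonzerodivisor $x\in\frak m$ (a minimal reduction of $\frak m$ if you want the sharp constant $e(R)$; any parameter gives some constant $\ell(R/xR)$, which is all the lemma requires). Since $x\in\frak m$, you have $xI\subseteq\frak m I$ for \emph{every} ideal $I$, whether or not $x\in I$ or $(x)$ is a reduction of $I$; hence $v(I)=\ell(I/\frak m I)\le\ell(I/xI)$. Then $\ell(I/xI)=e(x;I)\le e(x;I)+e(x;R/I)=e(x;R)=\ell(R/xR)$, using that $x$ is a nonzerodivisor on $I\subseteq R$ and additivity of multiplicity along $0\to I\to R\to R/I\to 0$ (equivalently, the snake lemma for multiplication by $x$). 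This treats all ideals at once, $\frak m$-primary or not, and yields the uniform constant $c=\ell(R/xR)+\ell(H^0_{\frak m}(R))$.
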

Goto and Suzuki in \cite[Theorem 3.1]{GS84} extended above result
for modules as follows.
\begin{lemma}\label{L3.2}
Let $M$ be a finitely generated $R$-module of dimension one. Then
there is a positive integer $c$ such that $v(N) = \ell (N/\frak mN)
\leq c$ for all submodule $N$ of $M$.
\end{lemma}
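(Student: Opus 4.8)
The plan is to deduce the statement from Lemma~3.1 (which is exactly the case $M=R$) by choosing a prime filtration of $M$ and exploiting the fact that the minimal number of generators $v(-)$ is subadditive along short exact sequences. In this way the whole problem is transported to the ring level, where Lemma~3.1 does the work, and no new length estimate is needed.

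First I would fix, once and for all, a prime filtration $0 = M_0 \subset M_1 \subset \cdots \subset M_s = M$ with $M_i/M_{i-1} \cong R/\frak p_i$ and $\frak p_i \in \mathrm{supp}(M)$. Since $\dim M = 1$, each quotient $R/\frak p_i$ is a local ring of dimension at most one. If $\dim R/\frak p_i = 1$, then Lemma~3.1 applied to $R/\frak p_i$ furnishes an integer $c_i$ such that $v(J) \le c_i$ for every ideal $J$ of $R/\frak p_i$; if $\dim R/\frak p_i = 0$, then $\frak p_i = \frak m$ and we may simply take $c_i = 1$. Put $c = c_1 + \cdots + c_s$; this number depends only on $M$ and on the chosen filtration, not on any submodule.

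Next, given an arbitrary submodule $N \subseteq M$, I would intersect it with the filtration, setting $N_i = N \cap M_i$, so that $0 = N_0 \subseteq N_1 \subseteq \cdots \subseteq N_s = N$. For each $i$ the second isomorphism theorem identifies $N_i/N_{i-1} = (N\cap M_i)/\big((N\cap M_i)\cap M_{i-1}\big)$ with a submodule of $M_i/M_{i-1} \cong R/\frak p_i$, that is, with an ideal of $R/\frak p_i$; hence $v(N_i/N_{i-1}) \le c_i$ by the previous paragraph. Because $-\otimes_R R/\frak m$ is right exact, the short exact sequence $0 \to N_{i-1} \to N_i \to N_i/N_{i-1} \to 0$ gives $v(N_i) \le v(N_{i-1}) + v(N_i/N_{i-1})$, and telescoping from $N_0 = 0$ yields
\[ v(N) \;\le\; \sum_{i=1}^{s} v(N_i/N_{i-1}) \;\le\; \sum_{i=1}^{s} c_i \;=\; c , \]
which is the required uniform bound.

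I do not anticipate a serious obstacle: the substantive input is Lemma~3.1 itself, and everything else is bookkeeping. The only mildly delicate point is to make sure that the graded pieces $N_i/N_{i-1}$ of the induced filtration really are (isomorphic to) ideals of the fixed rings $R/\frak p_i$, so that Lemma~3.1 applies to each of them, and that the contributions add up via the subadditivity of $v(-)$.
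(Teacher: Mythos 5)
Your proof is correct. The paper itself gives no argument for this lemma --- it simply cites \cite[Theorem 3.1]{GS84} --- so there is nothing to match against line by line; but your derivation is a clean, self-contained reduction to Lemma~3.1 and is essentially the standard way this statement is obtained. The two ingredients both check out: every $R$-submodule of $M_i/M_{i-1}\cong R/\frak p_i$ is killed by $\frak p_i$ and hence really is an ideal of the one-dimensional (or zero-dimensional) local ring $R/\frak p_i$, with the same minimal number of generators over $R$ as over $R/\frak p_i$; and the right-exactness of $-\otimes_R R/\frak m$ gives exactly the subadditivity $v(N_i)\le v(N_{i-1})+v(N_i/N_{i-1})$ you need --- the same subadditivity the author later isolates as Proposition~\ref{P3.6}(ii) for the invariant $c(-)$. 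So your argument is in complete harmony with the paper's own toolkit, and in fact supplies the proof the paper delegates to the reference.
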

\begin{notation}\rm Let $M$ be a finitely generated $R$-module. We
define
$$c(M) = \sup_N \{v(N )\, |\, N \subseteq M\}.$$
\end{notation}
\begin{remark}\rm
\begin{enumerate}[{(i)}]
\item By Lemma \ref{L3.2} we have if $d \leq 1$, then $c(M)$ is a
positive integer. Moreover if $d = 0$ then $c(M) \leq \ell(M)$.
\item If $d \geq 2$ since $v(\frak m^nM)$ is a polynomial of degree
$d-1$ when $n \gg 0$, then $c(M) = \infty$.
\end{enumerate}
\end{remark}
We present some properties of the invariant $c(M)$.
\begin{proposition} Let $M$ be a finitely generated $R$-module of
dimension $d \leq 1$. Then
$$c(M) = \sup_N \{\ell(N:_M \frak m/N)\, |\, N \subseteq M\}.$$
\end{proposition}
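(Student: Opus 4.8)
The plan is first to reformulate the right-hand side. For any submodule $N \subseteq M$ there is a canonical isomorphism $(N :_M \frak m)/N \cong \mathrm{Soc}(M/N)$, since $(N :_M \frak m)/N = 0 :_{M/N} \frak m$. Writing $c(M) = \sup_N v(N)$ and $s(M) = \sup_N \ell\big((N :_M \frak m)/N\big) = \sup_N \dim_{R/\frak m}\mathrm{Soc}(M/N)$, the claim is that $c(M) = s(M)$. I would note at the outset that $c(M) < \infty$ by Lemma \ref{L3.2} (this is where $d \le 1$ enters), so that once the first inequality below is in hand, both sides are finite and we are comparing non-negative integers.

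To prove $s(M) \le c(M)$, I would fix $N \subseteq M$ and set $N' = N :_M \frak m$, a submodule of $M$. Then $N \subseteq N'$ and $\frak m N' \subseteq N$, so the natural surjection $N'/\frak m N' \twoheadrightarrow N'/N$ gives
$$v(N') = \ell(N'/\frak m N') \ \ge\ \ell(N'/N) = \ell\big((N :_M \frak m)/N\big).$$
Since $v(N') \le c(M)$, taking the supremum over $N$ yields $s(M) \le c(M)$.

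For the reverse inequality $c(M) \le s(M)$, I would fix $N \subseteq M$ and instead set $N' = \frak m N$, again a submodule of $M$. Because $\frak m N \subseteq N'$ we have $N \subseteq N' :_M \frak m$, hence $N/\frak m N \hookrightarrow (N' :_M \frak m)/N'$ and therefore
$$\ell\big((N' :_M \frak m)/N'\big) \ \ge\ \ell(N/\frak m N) = v(N).$$
Taking the supremum over $N$ gives $s(M) \ge c(M)$, and combining the two inequalities proves the proposition.

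I do not expect a genuine obstacle here: the argument is a formal manipulation, and the only points that require a moment's care are the identification $(N :_M \frak m)/N \cong \mathrm{Soc}(M/N)$ and the correct choice of "witnessing" submodule in each direction ($N :_M \frak m$ for one inequality, $\frak m N$ for the other). The hypothesis $d \le 1$ is needed only to ensure, via Lemma \ref{L3.2}, that the suprema are finite; the identity of suprema itself holds for every finitely generated $M$, with both sides possibly equal to $+\infty$.
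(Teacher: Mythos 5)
Your proof is correct and follows essentially the same route as the paper, which rests on exactly the two inequalities you establish: $\ell\big((N:_M\frak m)/N\big)\le v(N:_M\frak m)$ and $v(N)\le\ell\big((\frak mN:_M\frak m)/\frak mN\big)$. You have simply filled in the details the paper leaves implicit.
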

\begin{proof}
The assertion follows form the facts
$$\ell(N:_M \frak m/N) \leq v(N:_M \frak m),$$
and
$$v(N) \leq \ell ((\frak mN :_M \frak m)/\frak mN).$$
\end{proof}

\begin{proposition}\label{P3.6}
We consider the following short exact sequence of finitely generated
$R$-modules of dimension at most one
$$0 \to M_1 \to M \to M_2 \to 0.$$
Then
\begin{enumerate}[{(i)}]\rm
\item {\it $c(M_1) \leq c(M)$ and $c(M_2) \leq c(M)$.}
\item $c(M) \leq c(M_1) + c(M_2)$.
\end{enumerate}
\end{proposition}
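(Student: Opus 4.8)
The plan is to reduce both parts to two elementary facts about the minimal number of generators $v(-)$, which follow at once by applying $-\otimes_R R/\frak m$ and using its right exactness, since $v(X)=\ell(X/\frak mX)=\dim_{R/\frak m}(X\otimes_R R/\frak m)$: (a) $v$ is non-increasing along surjections; (b) $v$ is subadditive on short exact sequences, i.e.\ $v(B)\le v(A)+v(C)$ whenever $0\to A\to B\to C\to 0$ is exact.

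For (i), write $\pi\colon M\to M_2$ for the canonical surjection. A submodule $N_1\subseteq M_1$ is also a submodule of $M$, so $v(N_1)\le c(M)$; taking the supremum over such $N_1$ gives $c(M_1)\le c(M)$. For a submodule $N_2\subseteq M_2$, I would pass to the preimage $N:=\pi^{-1}(N_2)\subseteq M$; then $\pi$ restricts to a surjection $N\twoheadrightarrow N_2$, so by (a) $v(N_2)\le v(N)\le c(M)$, and the supremum over $N_2$ gives $c(M_2)\le c(M)$.

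For (ii), let $N\subseteq M$ be arbitrary. The inclusion $N\hookrightarrow M$ together with $\pi$ produce the short exact sequence
$$0\longrightarrow N\cap M_1\longrightarrow N\longrightarrow (N+M_1)/M_1\longrightarrow 0 .$$
Here $N\cap M_1$ is a submodule of $M_1$, so $v(N\cap M_1)\le c(M_1)$, and $(N+M_1)/M_1$ is a submodule of $M/M_1=M_2$, so $v\bigl((N+M_1)/M_1\bigr)\le c(M_2)$. By (b) this gives $v(N)\le c(M_1)+c(M_2)$, and since $N$ was arbitrary, $c(M)\le c(M_1)+c(M_2)$.

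I do not expect a genuine obstacle here; the only points needing a little care are the passage to the preimage $\pi^{-1}(N_2)$ in the quotient case of (i) and the bookkeeping of the two exactness properties of $v$. It is worth noting that the hypothesis $\dim M\le 1$ is not used in the inequalities themselves: by Lemma \ref{L3.2} it only serves to guarantee that $c(M_1)$, $c(M)$ and $c(M_2)$ are finite, so that the statement is not vacuous. Alternatively, one could run the same argument through the socle description of $c$ from the preceding proposition, using that $\frak mK\subseteq N$ forces $K\cap M_1\subseteq (N\cap M_1):_{M_1}\frak m$ and $\pi(K)\subseteq \pi(N):_{M_2}\frak m$ for $K:=N:_M\frak m$, together with the length identity $\ell(K/N)=\ell\bigl((K\cap M_1)/(N\cap M_1)\bigr)+\ell\bigl(\pi(K)/\pi(N)\bigr)$.
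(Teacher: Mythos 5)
Your proof is correct and follows essentially the same route as the paper: the paper's argument for (ii) also uses the induced short exact sequence $0\to N\cap M_1\to N\to (N+M_1)/M_1\to 0$ (stated there simply as the existence of suitable $N_1\subseteq M_1$, $N_2\subseteq M_2$) together with subadditivity of $v$ on short exact sequences. Your treatment is if anything slightly more careful, since you make explicit the surjection argument needed for $c(M_2)\le c(M)$ in (i), which the paper dismisses as immediate.
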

\begin{proof}
(i) immediately follows from the definition of $c(M)$.\\
(ii) Let $N$ be a submodule of $M$ such that $v(N) = c(M)$. There
are submodules $N_1$ and $N_2$ of $M_1$ and $M_2$, respectively,
such that
$$0 \to N_1 \to N \to N_2 \to 0$$
is a short exact sequence. Then
$$c(M) =v(N) \leq v(N_1) + v(N_2) \leq c(M_1) + c(M_2).$$
\end{proof}

\section{The main result}
Recalling that we always assume that $(R, \frak m)$ be a complete
Notherian local ring. Let $E(R/\frak m)$ be the injective hull of
$R$-module $R/\frak m$. Let $A$ be an Artinian $R$-module. We have
the Matlis' dual of $A$, $N = \mathrm{Hom}(A, E(R/\frak m))$, is
Noetherian and $\mathrm{Ann}A = \mathrm{Ann} N$. In this section we
say an Artinian $R$-module $A$ of dimension $t$ if its dual is a
Noetherian module of dimension $t$, i.e. $\dim R/\mathrm{Ann}A = t$.
It is well known that $H^i_{\frak m}(M)$ is Artinian for all $i \geq
0$ (see \cite[Chapter 7]{BS}). Theorem \ref{T2.3} claims that a
finitely generated $R$-module $M$ of dimension $d$ and $p(M) \leq 1$
if and only if $\dim H^i_{\frak m}(M) \leq 1$ for all $i = 0,...,
d-1$. For the study of dimension of an Artinian module on a general
Noetherian local ring see \cite{CN02}. We need the following lemma.

\begin{lemma}[\cite{BS}, Lemma 10.2.16] \label{L3.1}
Let $E, F, I$ be $R$-module such that $E$ is finitely generated and
$I$ is injective. Then
$$\mathrm{Hom}(\mathrm{Hom}(E, F), I) \cong E \otimes \mathrm{Hom}(F, I).$$
\end{lemma}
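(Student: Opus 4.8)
The plan is to write both sides as covariant functors of $E$, exhibit an explicit natural transformation between them, and then reduce to the free case by a finite-presentation argument. So fix $F$ and $I$, and set $T_1(E) = E \otimes_R \mathrm{Hom}(F,I)$ and $T_2(E) = \mathrm{Hom}(\mathrm{Hom}(E,F),I)$. The structural point is that both $T_1$ and $T_2$ are right exact and commute with finite direct sums: this is clear for $T_1$, and for $T_2$ it follows because $\mathrm{Hom}(-,F)$ sends a right exact sequence to a left exact one while $\mathrm{Hom}(-,I)$ is exact (here is the only place the injectivity of $I$ is used), so the composite is right exact.

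Next I would define the comparison map $\eta_E \colon E \otimes_R \mathrm{Hom}(F,I) \to \mathrm{Hom}(\mathrm{Hom}(E,F),I)$ on generators by $\eta_E(e \otimes \phi)(\psi) = \phi(\psi(e))$ for $e \in E$, $\phi \in \mathrm{Hom}(F,I)$, $\psi \in \mathrm{Hom}(E,F)$. One checks that the right-hand formula is $R$-bilinear in the pair $(e,\phi)$, so $\eta_E$ is well defined, and that $\eta_E$ is natural in $E$: for a homomorphism $f \colon E \to E'$ the relevant square commutes by directly unwinding the definitions of $f \otimes \mathrm{id}$ and of $\mathrm{Hom}(\mathrm{Hom}(f,F),I)$. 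For $E = R$ the canonical identifications $\mathrm{Hom}(R,F) \cong F$ and $R \otimes_R \mathrm{Hom}(F,I) \cong \mathrm{Hom}(F,I)$ carry $\eta_R$ to the identity map, so $\eta_R$ is an isomorphism, and by additivity $\eta_{R^n}$ is an isomorphism for every $n$.

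Finally, since $R$ is Noetherian and $E$ is finitely generated, $E$ has a finite presentation $R^m \to R^n \to E \to 0$. Applying the two right exact functors $T_1$ and $T_2$ to this presentation and using naturality of $\eta$, I obtain a commutative diagram with exact rows whose vertical maps over $R^m$ and $R^n$ are isomorphisms; the five lemma then shows $\eta_E$ is an isomorphism, which is the claimed natural isomorphism $\mathrm{Hom}(\mathrm{Hom}(E,F),I) \cong E \otimes_R \mathrm{Hom}(F,I)$.

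I do not expect a genuine obstacle: this is the standard principle that a natural transformation of right exact functors which is an isomorphism on finite free modules is an isomorphism on all finitely generated modules. The only points requiring a little care are checking that $\eta_E$ is well defined and natural, and invoking the injectivity of $I$ at exactly the right step to get right exactness of $T_2$; both are routine bookkeeping. (Since the statement is quoted verbatim from Brodmann--Sharp, in the paper itself one would of course simply cite it rather than reproduce this argument.)
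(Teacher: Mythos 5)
Your proof is correct. The paper offers no argument of its own here --- it simply cites \cite[Lemma 10.2.16]{BS} --- and your proof is the standard one (and essentially the one in Brodmann--Sharp): the explicit natural map $\eta_E(e\otimes\phi)(\psi)=\phi(\psi(e))$, the observation that injectivity of $I$ makes $\mathrm{Hom}(\mathrm{Hom}(-,F),I)$ right exact, the check that $\eta_R$ is the identity, and the reduction to a finite presentation of $E$ via the five lemma are all exactly as they should be.
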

For an Artinian $R$-module $A$ we set $r(A):= \sup \{\ell(B:_A \frak
m/B)\, |\, B \subseteq A \}$.
\begin{corollary}\label{C4.2} Let $A$ be an Artinian $R$-module of dimension at
most one. Let $N = \mathrm{Hom}(A, E(R/\frak m))$. Then $r(A) =
c(N)$.
\end{corollary}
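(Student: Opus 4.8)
The plan is to use Matlis duality to translate the supremum defining $r(A)$ into the supremum defining $c(N)$, one submodule at a time, with Lemma \ref{L3.1} as the only non-formal ingredient. Since $(R,\frak m)$ is complete and $A$ is Artinian, $N = \mathrm{Hom}(A, E(R/\frak m))$ is Noetherian and the natural map $A \to \mathrm{Hom}(N, E(R/\frak m))$ is an isomorphism. Consequently $B \mapsto B^{\perp} := \mathrm{Hom}(A/B, E(R/\frak m)) = \{f \in N : f(B) = 0\}$ is an inclusion-reversing bijection between the submodules $B$ of $A$ and the submodules $L$ of $N$ (this is immediate from applying the exact functor $\mathrm{Hom}(-, E(R/\frak m))$ to $0 \to B \to A \to A/B \to 0$ and from the reflexivity above).

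First I would fix a submodule $B \subseteq A$ and rewrite
$$B :_A \frak m / B = 0 :_{A/B} \frak m = \mathrm{Hom}(R/\frak m, A/B).$$
Because $A/B$ is Artinian, this is an Artinian $R/\frak m$-vector space, hence of finite length; therefore applying the exact, length-preserving functor $\mathrm{Hom}(-, E(R/\frak m))$ does not change its length. Now invoke Lemma \ref{L3.1} with the finitely generated module $R/\frak m$ playing the role of $E$, with $F = A/B$ and $I = E(R/\frak m)$:
$$\mathrm{Hom}(\mathrm{Hom}(R/\frak m, A/B), E(R/\frak m)) \cong (R/\frak m) \otimes_R \mathrm{Hom}(A/B, E(R/\frak m)) = (R/\frak m) \otimes_R B^{\perp} \cong B^{\perp}/\frak m B^{\perp}.$$
Hence $\ell(B :_A \frak m / B) = \ell(B^{\perp}/\frak m B^{\perp}) = v(B^{\perp})$.

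Finally, since $B \mapsto B^{\perp}$ runs over all submodules of $N$ as $B$ runs over all submodules of $A$, I would take suprema in the identity just obtained:
$$r(A) = \sup_{B \subseteq A} \ell(B :_A \frak m / B) = \sup_{B \subseteq A} v(B^{\perp}) = \sup_{L \subseteq N} v(L) = c(N).$$
Note that the hypothesis $\dim A \le 1$ plays no role in the equality itself; it is only needed (via Lemma \ref{L3.2}) to guarantee that this common value is finite, which is what the applications in Section 4 require.

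The step that needs the most care is the bookkeeping of Matlis duals of subquotients: one must check that $\mathrm{Hom}(A/B, E(R/\frak m))$ genuinely sits inside $N$ as the submodule $B^{\perp}$, that the correspondence $B \leftrightarrow B^{\perp}$ is an honest bijection (both of which are the standard behaviour of Matlis duality over a complete local ring), and that Lemma \ref{L3.1} is applied with the \emph{finitely generated} module in the correct slot — here $R/\frak m$, not $A/B$. Everything else is the formal fact that Matlis duality preserves lengths of finite-length modules and converts $\mathrm{Soc}(-) = \mathrm{Hom}(R/\frak m, -)$ on the Artinian side into $(R/\frak m) \otimes_R -$ on the Noetherian side.
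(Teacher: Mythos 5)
Your proposal is correct and follows essentially the same route as the paper: both use Matlis duality to match submodules $B\subseteq A$ with submodules $L=\mathrm{Hom}(A/B,E(R/\frak m))\subseteq N$ and invoke Lemma \ref{L3.1} with $R/\frak m$ in the finitely generated slot to identify $\ell(B:_A\frak m/B)$ with $v(L)$. The only cosmetic difference is that you take suprema over the full inclusion-reversing bijection at once, whereas the paper proves the two inequalities separately (using that $c(N)$ is attained); your observation that $\dim A\le 1$ is needed only for finiteness, not for the equality of suprema, is also accurate.
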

\begin{proof} For each
submodule $B$ of $A$ set $L = \mathrm{Hom}(A/B, E(R/\frak m))$, then
$L$ is a submodule of $N$. By Lemma \ref{L3.1} we have
$$\mathrm{Hom}(\mathrm{Hom}(R/\frak m, A/B), E(R/\frak m)) \cong R/\frak m \otimes \mathrm{Hom}(A/B, E(R/\frak m)).$$
Hence $\ell(B:_A \frak m/B) = v(L) \leq c(N)$. Thus $r(A) \leq
c(N)$. Conversely, let $L$ be a submodule of $N$ such that $v(L) =
c(N)$. Let $B = \mathrm{Hom}(N/L, E(R/\frak m))$. We have $B$ is a
submodule of $A$. By duality we have $N/L \cong \mathrm{Hom}(B,
E(R/\frak m))$ so $L \cong \mathrm{Hom}(A/B, E(R/\frak m))$. As
above we have $\ell(B:_A \frak m/B) = v(L) = c(N)$, so $r(A) \geq
c(N)$.
\end{proof}

The next result immediately follows from Corollary \ref{C4.2} and
Proposition \ref{P3.6}.
\begin{corollary}\label{C4.3}
We consider the following short exact sequence of Artinian
$R$-modules of dimension at most one
$$0 \to A_1 \to A \to A_2 \to 0.$$
Then
\begin{enumerate}[{(i)}]\rm
\item {\it $r(A_1) \leq r(A)$ and $r(A_2) \leq r(A)$.}
\item $r(A) \leq r(A_1) + r(A_2)$.
\end{enumerate}
\end{corollary}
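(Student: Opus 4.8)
The plan is to transport the whole statement to the Noetherian side via Matlis duality and then quote Proposition \ref{P3.6}. Write $E = E(R/\frak m)$ and put $N = \mathrm{Hom}(A, E)$, $N_1 = \mathrm{Hom}(A_1, E)$, $N_2 = \mathrm{Hom}(A_2, E)$. Since $R$ is complete and each $A_i$ is Artinian, every $N_i$ is a finitely generated $R$-module with $\mathrm{Ann}_R N_i = \mathrm{Ann}_R A_i$, hence $\dim N_i = \dim R/\mathrm{Ann}_R A_i \le 1$; likewise $\dim N \le 1$. Because $E$ is injective, the functor $\mathrm{Hom}(-, E)$ is exact, so applying it to $0 \to A_1 \to A \to A_2 \to 0$ produces a short exact sequence of finitely generated $R$-modules of dimension at most one
$$0 \to N_2 \to N \to N_1 \to 0.$$

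Next I would invoke Corollary \ref{C4.2}, which yields $r(A) = c(N)$, $r(A_1) = c(N_1)$ and $r(A_2) = c(N_2)$. Applying Proposition \ref{P3.6} to the displayed sequence then finishes the argument: part (i) of that proposition gives $c(N_2) \le c(N)$ and $c(N_1) \le c(N)$, which read off as $r(A_2) \le r(A)$ and $r(A_1) \le r(A)$, proving (i); part (ii) gives $c(N) \le c(N_2) + c(N_1)$, i.e. $r(A) \le r(A_1) + r(A_2)$, proving (ii).

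There is essentially no serious obstacle here. The only two points that deserve a word of care are: (a) Matlis duality reverses the arrows, so it is $N_2$ rather than $N_1$ that sits as a submodule of $N$ — this is harmless since the inequalities in Proposition \ref{P3.6}(i) and the sum in (ii) are symmetric in the two outer terms; and (b) one must know that the dual modules have dimension at most one so that Proposition \ref{P3.6} is applicable, which is immediate from $\mathrm{Ann}_R A_i = \mathrm{Ann}_R N_i$ together with the hypothesis $\dim A_i \le 1$. Everything else is a direct substitution of $r$ for $c$ through Corollary \ref{C4.2}.
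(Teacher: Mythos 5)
Your proposal is correct and is exactly the argument the paper intends: the paper states that the corollary "immediately follows from Corollary \ref{C4.2} and Proposition \ref{P3.6}," and your write-up simply makes explicit the dualized short exact sequence $0 \to N_2 \to N \to N_1 \to 0$ and the substitution $r = c$ via Matlis duality. The two points of care you flag (arrow reversal and preservation of dimension under duality) are the right ones and are handled correctly.
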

Recalling that a sequence of elements $x_1,...,x_k$ is called a {\it
filter regular sequence} of $M$ if $\mathrm{Supp}\,
((x_1,...,x_{i-1})M:x_i)/(x_1,...,x_{i-1})M \subseteq \{\frak m\}$
for all $i = 1,...,k$.
\begin{proposition}\label{P4.4}
Let $M$ be a finitely generated $R$-module of dimension $d$ and
$p(M) \leq 1$. Then for every filter regular sequence $x_1,...,x_k$,
$k \leq d$, of $M$ we have
$$r(H^j_{\frak m}(M/(x_1,...,x_k)M)) \leq \sum_{i=j}^{j+k} \binom{k}{i-j} r(H^i_{\frak m}(M))$$
for all $j < d-k$, and
$$\dim \mathrm{Soc}(H^{d-k}_{\frak m}(M/(x_1,...,x_k)M)) \leq \sum_{i=d-k}^{d-1} \binom{k}{k+i-d} r(H^i_{\frak
m}(M)) + \dim \mathrm{Soc}(H^{d}_{\frak m}(M)).$$
\end{proposition}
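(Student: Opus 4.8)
The natural approach is induction on $k$, reducing from $M/(x_1,\dots,x_k)M$ to $M/(x_1,\dots,x_{k-1})M$ by cutting with the single filter-regular element $x_k$. Let me think about the base case $k=1$ first, since that carries all the essential ideas.

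For $k=1$: write $x = x_1$, a filter-regular element on $M$. Then $0:_M x$ is of finite length (it is supported at $\{\mathfrak m\}$), so the short exact sequence $0 \to M/(0:_M x) \xrightarrow{x} M \to M/xM \to 0$ and the finite-length correction together give, for each $i$, an exact piece
$$H^i_{\mathfrak m}(M) \xrightarrow{x} H^i_{\mathfrak m}(M) \to H^i_{\mathfrak m}(M/xM) \to H^{i+1}_{\mathfrak m}(M) \xrightarrow{x} H^{i+1}_{\mathfrak m}(M),$$
valid up to finite-length modifications coming from $0:_M x$ and from $H^0$. More precisely, $H^i_{\mathfrak m}(M/xM)$ sits in a short exact sequence
$$0 \to \big(H^i_{\mathfrak m}(M)/xH^i_{\mathfrak m}(M)\big)' \to H^i_{\mathfrak m}(M/xM) \to \big(0:_{H^{i+1}_{\mathfrak m}(M)} x\big) \to 0,$$
where the prime indicates a finite-length adjustment (and is actually unnecessary once $i < d-1$, where $x$ acts nilpotently on the Artinian $H^i_{\mathfrak m}(M)$ of dimension $\le 1$). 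By Corollary \ref{C4.3}(ii), $r$ is subadditive on short exact sequences, so
$$r\big(H^i_{\mathfrak m}(M/xM)\big) \le r\big(H^i_{\mathfrak m}(M)/xH^i_{\mathfrak m}(M)\big) + r\big(0:_{H^{i+1}_{\mathfrak m}(M)} x\big).$$
Now I claim each term on the right is $\le r(H^i_{\mathfrak m}(M))$, resp. $\le r(H^{i+1}_{\mathfrak m}(M))$: for a quotient $A/xA$ this is clear because socle-type quotients $B:_{A/xA}\mathfrak m / B$ pull back; for a submodule $0:_A x$ it follows from Corollary \ref{C4.3}(i) since $0:_A x$ is a submodule of $A$. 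This yields $r(H^j_{\mathfrak m}(M/xM)) \le r(H^j_{\mathfrak m}(M)) + r(H^{j+1}_{\mathfrak m}(M)) = \sum_{i=j}^{j+1}\binom{1}{i-j} r(H^i_{\mathfrak m}(M))$ for $j < d-1$. For the top piece $j = d-1$: $H^{d}_{\mathfrak m}(M/xM) \cong H^d_{\mathfrak m}(M)/xH^d_{\mathfrak m}(M)$ (since $H^{d+1}_{\mathfrak m}(M)=0$), and applying $\mathrm{Hom}(R/\mathfrak m,-)$ to the surjection $H^{d-1}_{\mathfrak m}(M/xM) \twoheadleftarrow H^{d-1}_{\mathfrak m}(M)/xH^{d-1}_{\mathfrak m}(M)$ combined with the injection into $0:_{H^d_{\mathfrak m}(M)}x$ gives
$$\dim\mathrm{Soc}\big(H^{d-1}_{\mathfrak m}(M/xM)\big) \le r\big(H^{d-1}_{\mathfrak m}(M)\big) + \dim\mathrm{Soc}\big(0:_{H^d_{\mathfrak m}(M)}x\big) \le r\big(H^{d-1}_{\mathfrak m}(M)\big) + \dim\mathrm{Soc}\big(H^d_{\mathfrak m}(M)\big),$$
using $\mathrm{Soc}(0:_A x) \subseteq \mathrm{Soc}(A)$. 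This is the $k=1$ case.

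For the inductive step, set $M' = M/(x_1,\dots,x_{k-1})M$; then $x_k$ is filter-regular on $M'$, $\dim M' = d-k+1$, and—crucially—$p(M') \le p(M) \le 1$ as well (cutting by a filter-regular element does not increase the polynomial type, equivalently keeps $\dim H^i_{\mathfrak m} \le 1$ below the top), so the $k=1$ case applies to $M'$. Combining $r(H^j_{\mathfrak m}(M'/x_kM')) \le r(H^j_{\mathfrak m}(M')) + r(H^{j+1}_{\mathfrak m}(M'))$ with the induction hypothesis $r(H^i_{\mathfrak m}(M')) \le \sum_{\ell} \binom{k-1}{\ell-i} r(H^{\ell}_{\mathfrak m}(M))$ and re-indexing via Pascal's identity $\binom{k-1}{a}+\binom{k-1}{a-1}=\binom{k}{a}$ gives exactly the stated bound for $j < d-k$; the same bookkeeping at the top index, feeding the $\dim\mathrm{Soc}$ inequality of the $k=1$ case into the induction hypothesis, produces the second displayed formula.

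The main obstacle is bookkeeping with the finite-length "error terms": when $i = d-1$ the module $H^i_{\mathfrak m}(M)$ need not have dimension $\le 1$, and $x$ need not act nilpotently, so the clean exact sequence above must be handled carefully (this is why the statement singles out $j < d-k$ versus $j = d-k$ and switches from $r(-)$ to $\dim\mathrm{Soc}(-)$ at the top). I would be careful to verify that $r(H^i_{\mathfrak m}(M))$ is even finite for $i < d$—which it is, being $c$ of a Noetherian module of dimension $\le 1$ by Corollary \ref{C4.2} and Lemma \ref{L3.2}—and that at the very top we only need the single number $\dim\mathrm{Soc}(H^d_{\mathfrak m}(M))$, not finiteness of $r(H^d_{\mathfrak m}(M))$. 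The other point requiring a line of justification is that $0:_M(x_1,\dots,x_{k-1})$-type kernels appearing when passing to $M'$ are finite length and thus contribute nothing to the $r$-estimates beyond what Corollary \ref{C4.3} already absorbs.
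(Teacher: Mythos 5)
Your proposal is correct and follows essentially the same route as the paper: induction on $k$ via the long exact sequence of local cohomology for multiplication by a filter-regular element, splitting $H^j_{\frak m}(M/xM)$ between a quotient of $H^j_{\frak m}(M)$ and a submodule of $H^{j+1}_{\frak m}(M)$, applying the monotonicity and subadditivity of $r$ from Corollary \ref{C4.3} (after checking $p(M/xM)\leq 1$), switching to $\dim\mathrm{Soc}$ at the top degree, and finishing with Pascal's identity. No substantive differences from the paper's argument.
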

\begin{proof} Induction on $k$, the case $k=0$ is trivial. If $k= 1$, the
short exact sequence
$$0 \to M/0:_Mx_1 \to M \to M/x_1M \to 0$$
induces the following exact sequence
$$H^j_{\frak m}(M) \to H^j_{\frak m}(M/x_1M) \to H^{j+1}_{\frak m}(M/0:_Mx_1)$$
for all $j < d-1$. Since $\ell(0:_Mx_1) < \infty$ we have
$H^{j+1}_{\frak m}(M/0:_Mx_1) \cong H^{j+1}_{\frak m}(M)$ for all $j
\geq 0$. Hence $\mathrm{Ann} H^j_{\frak m}(M/x_1M) \supseteq
\mathrm{Ann}H^j_{\frak m}(M) \mathrm{Ann}H^{j+1}_{\frak m}(M)$ for
all $j < d-1$. Therefore $\dim R/\mathrm{Ann} H^j_{\frak m}(M/x_1M)
\leq 1$ for all $j = 0,...,d-2$, so $p(M/x_1M) \leq 1$. By Corollary
\ref{C4.3} it is clear that
$$r(H^j_{\frak m}(M/x_1M)) \leq r(H^j_{\frak m}(M)) + r(H^{j+1}_{\frak m}(M))$$
for all $j < d-1$. On the other hand we have the following exact
sequence
$$H^{d-1}_{\frak m}(M) \to H^{d-1}_{\frak m}(M/x_1M) \to H^{d}_{\frak m}(M) \overset{x}{\to} H^{d}_{\frak m}(M).$$
Thus we have the short exact sequence
$$0 \to A \to H^{d-1}_{\frak m}(M/x_1M) \to 0:_{H^{d}_{\frak m}(M)}x_1 \to 0,$$
where $A$ is a quotient of $H^{d-1}_{\frak m}(M)$. By applying the
functor $\mathrm{Hom}(R/\frak m, \bullet)$ to the above short exact
sequence we get the following exact sequence
$$0 \to 0:_A\frak m \to 0:_{H^{d-1}_{\frak m}(M/x_1M)}\frak m \to 0:_{H^{d}_{\frak m}(M)}\frak m.$$
Therefore
\begin{eqnarray*}
\dim \mathrm{Soc}(H^{d-1}_{\frak m}(M/x_1M)) &\leq& \dim
\mathrm{Soc}(A) + \dim \mathrm{Soc}(H^{d}_{\frak m}(M))\\
&\leq& r(A) + \dim \mathrm{Soc}(H^{d}_{\frak m}(M))\\
&\leq& r(H^{d-1}_{\frak m}(M)) + \dim \mathrm{Soc}(H^{d}_{\frak
m}(M)).
\end{eqnarray*}
So the assertion holds true if $k=1$. For $k> 1$ by induction we
have
\begin{eqnarray*}
r(H^j_{\frak m}(M/(x_1,...,x_k)M)) &\leq & r(H^j_{\frak
m}(M/(x_1,...,x_{k-1})M)) + r(H^{j+1}_{\frak
m}(M/(x_1,...,x_{k-1})M))\\
& \leq & \sum_{i = j}^{k+j-1}\binom{k-1}{i-j}r(H^i_{\frak m}(M)) +
\sum_{i = j+1}^{k+j}\binom{k-1}{i-j-1}r(H^i_{\frak m}(M))\\
&=& \sum_{i=j}^{j+k} \binom{k}{i-j} r(H^i_{\frak m}(M))
\end{eqnarray*}
for all $j<d-k$. Moreover, we have\\
\newline
$ \dim \mathrm{Soc}(H^{d-k}_{\frak m}(M/(x_1,...,x_k)M))$
\begin{eqnarray*}
 & \leq & r(H^{d-k}_{\frak m}(M/(x_1,...,x_{k-1})M)) + \dim \mathrm{Soc}(H^{d-k+1}_{\frak m}(M/(x_1,...,x_{k-1})M))\\
&\leq & \sum_{k = d-k}^{d-1}\binom{k-1}{k+i-d}r(H^i_{\frak m}(M)) +
\sum_{i=d-k+1}^{d-1} \binom{k-1}{k+i-d-1} r(H^i_{\frak m}(M)) + \dim
\mathrm{Soc}(H^{d}_{\frak m}(M))\\
&=& \sum_{i=d-k}^{d-1} \binom{k}{k+i-d} r(H^i_{\frak m}(M)) + \dim
\mathrm{Soc}(H^{d}_{\frak m}(M)).
\end{eqnarray*}
The proof is complete.
\end{proof}
\begin{remark}\rm \label{R4.6}
It should be noted that for every parameter ideal $\frak q = (x_1,...,x_d)$ of
$M$ we can choose a system of parameters $\underline{y} = y_1,...,y_d$ which is a filter regular sequence such that $\frak q = (y_1,...,y_d)$. Indeed, by the prime avoidance theorem we can choose an element $y_1 \in \frak q \setminus \frak {mq}$ and $y_1 \notin  \frak p$ for all  $\frak p \in \mathrm{Ass}M$ and $\frak p \neq \frak m$. Therefore $y_1$ is both a parameter element and a filter regular element of $M$. For $i = 2,..., d$, by applying the prime avoidance theorem again there exists an element $y_i \in \frak q \setminus \big(\frak {mq}\cup (y_1,...,y_{i-1})\big)$ and $y_i \notin  \frak p$ for all  $\frak p \in \mathrm{Ass}M/(y_1,...,y_{i-1})M$ and $\frak p \neq \frak m$. Thus we have a system of parameters $\underline{y} = y_1,...,y_d$ which is also a filter regular sequence of $M$. The claim $\frak q = (y_1,...,y_d)$ is easy by the Nakayama lemma.
\end{remark}
Applying for $k=d$ in Proposition \ref{P4.4} and using Remark \ref{R4.6} we have the main result
of this paper as follows.
\begin{theorem}
Let $M$ be a finitely generated $R$-module of dimension $d$ and
$p(M) \leq 1$. Then the index of reducibility of parameter ideal
$\frak q$ of $M$ is bounded above by a invariant independent of the
choice of $\frak q$. Namely
$$\mathcal{N}_R(\frak q, M) \leq \sum_{i=0}^{d-1} \binom{d}{i} r(H^i_{\frak
m}(M)) + \dim \mathrm{Soc}(H^{d}_{\frak m}(M))$$ for all parameter
ideals $\frak q$ of $M$.
\end{theorem}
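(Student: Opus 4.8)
The plan is to obtain the theorem as the special case $k=d$ of Proposition \ref{P4.4}, after first normalising the given parameter ideal. So fix a parameter ideal $\frak q$ of $M$. By Remark \ref{R4.6} I may write $\frak q = (y_1,\dots,y_d)$ where $\underline{y} = y_1,\dots,y_d$ is a system of parameters of $M$ which is simultaneously a filter regular sequence; this is exactly what is needed to make Proposition \ref{P4.4} applicable to $\frak q$. Since $\dim M/\frak qM = 0$, the module $M/\frak qM$ is $\frak m$-torsion, hence $H^0_{\frak m}(M/\frak qM) = M/\frak qM$, and therefore by the standard description of the index of reducibility recalled in Section $2$ we have
$$\mathcal{N}_R(\frak q, M) = \dim_{R/\frak m}\mathrm{Soc}(M/\frak qM) = \dim \mathrm{Soc}\big(H^0_{\frak m}(M/(y_1,\dots,y_d)M)\big).$$

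Next I apply the second inequality of Proposition \ref{P4.4} to the filter regular sequence $y_1,\dots,y_d$ with $k=d$, so that $d-k = 0$ and the binomial coefficient $\binom{k}{k+i-d}$ becomes $\binom{d}{i}$. This gives directly
$$\dim \mathrm{Soc}\big(H^{0}_{\frak m}(M/(y_1,\dots,y_d)M)\big) \leq \sum_{i=0}^{d-1} \binom{d}{i} r\big(H^i_{\frak m}(M)\big) + \dim \mathrm{Soc}\big(H^{d}_{\frak m}(M)\big),$$
and combining with the first paragraph yields the asserted bound on $\mathcal{N}_R(\frak q, M)$. The case $d=0$ is immediate, the sum being empty and $M/\frak qM = M$.

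It then remains to observe that the right-hand side is a genuine finite invariant of $M$ independent of $\frak q$: independence of $\frak q$ is clear since only $M$ enters, and for finiteness note that $p(M)\leq 1$ together with Theorem \ref{T2.3} forces $\dim R/\mathrm{Ann}\,H^i_{\frak m}(M) \leq 1$, so each $H^i_{\frak m}(M)$ ($i=0,\dots,d-1$) is an Artinian module of dimension at most one; by Corollary \ref{C4.2}, $r(H^i_{\frak m}(M)) = c\big(\mathrm{Hom}(H^i_{\frak m}(M), E(R/\frak m))\big)$, and the dual here is Noetherian of dimension at most one, so Lemma \ref{L3.2} gives $r(H^i_{\frak m}(M)) < \infty$. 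Finally $\mathrm{Soc}(H^d_{\frak m}(M))$ is finite dimensional because $H^d_{\frak m}(M)$ is Artinian.

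As for the difficulty: essentially all of it has already been absorbed into Proposition \ref{P4.4}, namely the inductive bookkeeping of socles and of the invariant $r$ along a filter regular sequence (via the short exact sequences coming from multiplication by each $x_i$ and the vanishing $H^{j+1}_{\frak m}(M/0\!:_M x_1) \cong H^{j+1}_{\frak m}(M)$), and the control of $r$ for Artinian modules of dimension $\leq 1$ provided by Section $3$ through the Matlis dual. The only genuinely new input at this stage is the reduction of Remark \ref{R4.6}, which lets one assume the given $\frak q$ is generated by a filter regular system of parameters; once that is available, the theorem is just the instance $k=d$, $j=0$ of the Proposition, so no further obstacle is expected.
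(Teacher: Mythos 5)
Your proof is correct and follows exactly the paper's route: the paper likewise obtains the theorem by invoking Remark \ref{R4.6} to replace the generators of $\frak q$ with a filter regular system of parameters and then applying the $k=d$ case of Proposition \ref{P4.4}, using $\mathcal{N}_R(\frak q, M)=\dim\mathrm{Soc}(M/\frak qM)=\dim\mathrm{Soc}(H^0_{\frak m}(M/\frak qM))$. Your added remarks on finiteness of $r(H^i_{\frak m}(M))$ via Corollary \ref{C4.2} and Lemma \ref{L3.2} are consistent with the paper's setup and fill in details the paper leaves implicit.
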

Notice that in the case $M$ is generalized Cohen-Macaulay our bound
is a sharp of the Goto-Suzuki one since $r(H^i_{\frak m}(M)) \leq
\ell(H^i_{\frak m}(M))$ for all $i$. An $R$-module $M$ is called
{\it unmixed} if $\dim R/\frak p = \dim M$ for all $\frak p \in
\mathrm{Ass}M$. It is not difficult to see that if $M$ is an unmixed
module of dimension three, then $p(M) \leq 1$ (see \cite[Theorem
8.1.1]{BH98}). The following is a generalization of \cite[Corollary
3.7]{GS84} for modules.
\begin{corollary}\label{C4.6} Let $M$ is an unmixed
module of dimension three. Then the index of reducibility of
parameter ideal $\frak q$ of $M$ is bounded above by a invariant
independent of the choice of $\frak q$.
\end{corollary}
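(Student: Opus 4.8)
The plan is to deduce Corollary \ref{C4.6} directly from the main theorem above, so the only real task is to check that an unmixed $R$-module $M$ of dimension three satisfies $p(M) \le 1$. Since $M$ is unmixed it is equidimensional, so by Theorem \ref{T2.3}(ii) it is enough to show $\dim NC(M) \le 1$; equivalently, that $M_{\frak p}$ is Cohen--Macaulay whenever $\frak p \in \mathrm{Supp}(M)$ and $\dim R/\frak p \ge 2$ (the only possible values being $2$ and $3$, as all minimal primes of $\mathrm{Supp}(M)$ have coheight $3$). Fix such a $\frak p$. Because $R$ is complete it is universally catenary, and for each $\frak q \in \mathrm{Ass}(M)$ the ring $R/\frak q$ is a complete local domain, hence equidimensional; therefore $\dim R_{\frak p}/\frak q R_{\frak p} = \dim R/\frak q - \dim R/\frak p = 3 - \dim R/\frak p$ for every such $\frak q$ contained in $\frak p$. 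Since $\mathrm{Ass}(M_{\frak p}) = \{\frak q R_{\frak p} : \frak q \in \mathrm{Ass}(M),\ \frak q \subseteq \frak p\}$, this shows $M_{\frak p}$ is again unmixed, of dimension $3 - \dim R/\frak p$. If $\dim R/\frak p = 3$ then $\frak p$ is a minimal prime of $M$ and $M_{\frak p}$ is Artinian, hence Cohen--Macaulay; if $\dim R/\frak p = 2$ then $M_{\frak p}$ is unmixed of dimension one, so no associated prime of $M_{\frak p}$ equals $\frak p R_{\frak p}$, whence $\mathrm{depth}\,M_{\frak p} \ge 1 = \dim M_{\frak p}$ and $M_{\frak p}$ is again Cohen--Macaulay. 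Thus $NC(M) \subseteq \{\frak p : \dim R/\frak p \le 1\}$, giving $p(M) = \dim NC(M) \le 1$. (One could instead quote \cite[Theorem 8.1.1]{BH98} for this step.)

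With $p(M) \le 1$ in hand, the main theorem applies with $d = 3$ and yields
$$\mathcal{N}_R(\frak q, M) \le r(H^0_{\frak m}(M)) + 3\, r(H^1_{\frak m}(M)) + 3\, r(H^2_{\frak m}(M)) + \dim \mathrm{Soc}(H^3_{\frak m}(M))$$
for every parameter ideal $\frak q$ of $M$, and it remains only to note that the right-hand side is a finite invariant of $M$. For $i \le 2$, Theorem \ref{T2.3} together with $p(M) \le 1$ shows $H^i_{\frak m}(M)$ is Artinian of dimension at most one, so its Matlis dual is Noetherian of dimension at most one and $r(H^i_{\frak m}(M))$ is finite by Corollary \ref{C4.2} and Lemma \ref{L3.2}; and $\dim \mathrm{Soc}(H^3_{\frak m}(M))$ is finite since $H^3_{\frak m}(M)$ is Artinian. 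This completes the argument.

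I do not anticipate a serious obstacle: the corollary is a plain specialization of the main theorem, and the single point requiring care is the implication ``$M$ unmixed of dimension three $\Rightarrow p(M) \le 1$''. There the subtlety is that Theorem \ref{T2.3}(ii) needs $M$ equidimensional, and one must know that localizing an unmixed module at a prime of its support again produces an unmixed module --- which is exactly where one uses that the complete local ring $R$ is universally catenary (equivalently, that $R/\frak q$ is equidimensional for each $\frak q \in \mathrm{Ass}\,M$). Everything else is bookkeeping already established in Sections $2$ and $4$.
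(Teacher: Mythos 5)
Your proposal is correct and follows the same route as the paper: the corollary is a direct specialization of the main theorem once one knows that an unmixed module of dimension three has $p(M)\leq 1$, which the paper simply cites from \cite[Theorem 8.1.1]{BH98} while you prove it directly via Theorem \ref{T2.3}(ii) and the catenarity of the complete ring $R$. Your added verification that the right-hand side of the bound is finite (via Corollary \ref{C4.2} and Lemma \ref{L3.2}) is also consistent with what the paper establishes.
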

If $(R, \frak m)$ is a local ring of dimension three, then
$\mathcal{N}_R(\frak q, R)$ is bounded above for all parameter
ideals $\frak q$ of $R$ (cf. \cite[Theorem 3.8]{GS84}). Thus the
class of modules for which the index of reducibility of parameter
ideals is bounded is strictly larger than the class of modules of
the polynomial type at most one. In \cite[Example 3.9]{GS84} Goto
and Suzuki also constructed a ring of dimension four and the index
of reducibility of parameter ideals are not bounded above. Notice
that the ring of Goto and Suzuki has the polynomial type three.
Therefore it is natural to raise the following question.\\
\begin{question}\rm Is it true that $\mathcal{N}_R(\frak q, M)$ is
bounded above for all parameter ideals $\frak q$ of $M$ if and only
if $p(M) \leq 2$.
\end{question}

{\bf Acknowledgements:} The author is grateful to Professor Nguyen
Tu Cuong for his valuable comments on the first draft of this paper.
This paper has been written while the author is visiting the Vietnam
Institute for Advanced Study in Mathematics (VIASM), Hanoi, Vietnam.
He would like to thank VIASM for their support and hospitality.

\textsc{Department of Mathematics, FPT University, 8 Ton That Thuyet Road, Ha Noi, Viet Nam}\\
 {\it E-mail address}: quyph@fpt.edu.vn
\end{document}